\documentclass[a4paper,12pt]{article}
\usepackage[a4 paper, margin=2.5cm]{geometry}

\setlength{\parindent}{0pt}

\usepackage[normalem]{ulem}

\usepackage{hyperref}

\usepackage{amsmath,amsfonts,amsthm,amssymb}
\usepackage{color,enumitem,graphicx}

\usepackage[T1]{fontenc}
\usepackage{lmodern}





%


\numberwithin{equation}{section}

\newtheorem{theorem}{Theorem}[section]
\newtheorem{lemma}[theorem]{Lemma}

\theoremstyle{definition}

\theoremstyle{remark}
\newtheorem{remark}[theorem]{Remark}

\newtheorem{q}[theorem]{Question}



\newcommand{\T}{\mathrm{T}}
\newcommand{\Id}{\mathrm{Id}}
\newcommand{\A}{\mathrm{A}}
\newcommand{\ed}{\mathrm{d}}
\newcommand{\de}{\partial}

\newcommand{\End}{\mathrm{End}}
\newcommand{\SO}{\mathrm{SO}}
\newcommand{\Or}{\mathrm{O}}
\newcommand{\V}{\mathcal{V}}
\newcommand{\so}{\mathfrak{so}}

\newcommand{\R}{\mathbb{R}}
\newcommand{\N}{\mathbb{N}}

\newcommand{\la}{\langle}
\newcommand{\ra}{\rangle}

\newcommand{\h}{\mathcal{H}}

\newcommand{\emb}{\hookrightarrow}

\newcommand{\In}{\subset}
\newcommand{\Om}{\Omega}
\newcommand{\om}{\omega}
\newcommand{\dl}{{\delta}}
\newcommand{\Dl}{{\Delta}}

\newcommand{\al}{{\alpha}}

\newcommand{\id}{\,\,\ed}

\newcommand{\D}{{\nabla}}

\newcommand{\ti}[1]{{\tilde{#1}}}
\newcommand{\eps}{{\varepsilon}}
\newcommand{\fr}[2]{\frac{#1}{#2}}

\newcommand{\db}{\overline{\partial}}
\newcommand{\nn}{\nonumber}

\usepackage{todonotes}

\newcommand{\cN}{{\mathcal N}}

\newcommand{\cR}{{\mathcal R}}

\newcommand{\cV}{{\mathcal V}}


\begin{document}

\title{A Note on Moving Frames along Sobolev Maps and the Regularity of Weakly Harmonic Maps}
\author{Luigi Appolloni and Ben Sharp}
\maketitle
\begin{abstract}
 The purpose of this note is twofold. First we show that, for weakly differentiable maps between Riemannian manifolds of any dimension, a smallness condition on a Morrey-norm of the gradient is sufficient to guarantee that the pulled-back tangent bundle is trivialised by a finite-energy frame over simply connected regions in the domain. This is achieved via new structure equations for a connection introduced by Rivi\`ere in the study of weakly harmonic maps, combined with Coulomb-frame methods and the Hardy-BMO duality of Fefferman-Stein. 
 
 We also prove that for weakly harmonic maps from domains of any dimension into closed homogeneous targets, a smallness condition on the BMO seminorm of the map is sufficient to obtain full regularity. 
\end{abstract}
\section{Introduction}

Since the work of Wente \cite{wente}, so-called div-curl structures or ``Wente terms'' have been known to appear crucially in the study of geometric PDE and calculus of variations in the large. Such terms are generally of the form $E\cdot D$ for vector fields in conjugate Lebesgue spaces $E,D\in L^p, L^{p^\ast}$ for which one is divergence-free whilst the other is curl-free: \emph{a-priori} only in $L^1$, these terms lie in the slightly smaller Hardy space $\h^1$ which enjoys better properties in relation to weak convergence and under convolution by singular integrals (and hence elliptic regularity theory) see Coiffman-Lions-Mayers-Semmes \cite{CLMS93} for generalisations and links to classical harmonic analysis. In his celebrated work on the regularity theory of harmonic maps from surfaces, Fr\'ed\'eric H\'elein \cite{helein_conservation} used a moving frame technique along the map to re-write the equations in such a way that Wente-type terms naturally appear. This was extended by Fabrice Bethuel \cite{Be93} for weakly harmonic maps $u:B_1^m\to\cN$ from higher dimensional domains,  under a smallness condition on the $M^{2,m-2}$ Morrey norm of the gradient (which is just the $L^2$-norm when $m=2$):
\[
\|\ed u\|_{M^{2,m-2}(B_1)}:=\sup_{x \in B_1^m, r > 0} 
 \left( r^{2-m}\int_{B_r(x) \cap B_1^m}  |\ed u|^2   \, dx \right)^{1/2}.
\]   The appearance of Wente terms is sufficient to conclude that a weakly harmonic map whose gradient is small in this Morrey-norm, is in fact smooth. This smallness condition is guaranteed for weakly stationary harmonic maps away from a closed set $S\In B_1$ satisfying $\h^{m-2}(S)=0$. Here are throughout, $\cN^n\emb \R^d$ is an $n$-dimensional submanifold of $\R^d$ equipped with the induced metric and $B_1^m\In \R^m$ denotes the open unit ball.

In \cite{riviere_inventiones} Tristan Rivi\`ere established a deep and remarkable link between the above regularity theory and systems of the form
\begin{equation}\label{eq:omeq}
	-\Dl u^i = \Om^i_j \cdot \ed u^j \qquad \text{for $u\in W^{1,2}(B_1^m,\R^d)$} \qquad \text{and $\Om\in L^2(B_1^m,\so(d)\otimes \T^\ast \R^m)$.}
\end{equation}
Rivi\`ere found a new gauge in order to re-write the above as a conservation law under an appropriate smallness-condition on $\Om$,  when $m=2$. In particular he uncovered hidden Wente terms and showed that solutions were continuous. This was extended by Rivi\`ere-Struwe \cite{RS08} to higher-dimensional domains, wherein they recover regularity of such systems under the natural assumption that $\Om, \ed u\in M^{2,m-2}$ and $\|\Om \|_{M^{2,m-2}}$ is small. This powerful observation concerning systems of the form \eqref{eq:omeq} has had deep and fruitful consequences in many nearby regularity problems in geometric PDE both in higher-order and non-local settings, see e.g. \cite{riviere_inventiones, dLR11, LR08, S08} and citations thereof.

In particular given a $C^2$-Riemannian manifold $\cN\emb \R^d$ and 
\begin{equation*}
	u\in W^{1,2}(B_1^m,\cN):=\{v\in W^{1,2}(B_1^m,\R^d): v(x)\in\cN \quad \text{for a.e. $x\in B_1^m$}\}	
\end{equation*} 
Rivi\`ere introduced the following specific $\Om$ which we henceforth denote with the lower case $\om = \om(\cN,u): B_1^m \to \so(d)\otimes \T^\ast \R^m$: let $\A\in \Gamma(\T^\ast\cN \otimes \T^\ast\cN \otimes \V\cN)$ be the second fundamental form defined by $\A(X,Y) = (D_X Y)^\bot$ which one extends in the obvious way as a section of $\T^\ast \R^d\otimes \T^\ast \R^d\otimes \T \R^d$ over $\cN$ via $\A(X,Y) :=\A(X^\top, Y^\top)$ and can be written in ambient $\R^d$-coordinates $\{z\}$ via $\A=\A^i_{jk}\ed z^j\otimes \ed z^k\otimes \de_{z^i}$. For $u$ as above define 
\begin{equation}\label{eq:omdef}
	\om = \om^i_j=-(\A^i_{jk}(u) - \A^j_{ik}(u))\ed u^k.
\end{equation}
In particular given a vector $v\in \R^d$ we may see that $\om$ may be equivalently defined without coordinates via
\begin{equation}\label{eq:omv}
	\om v =\om^i_j v^j = -\A(v,\ed u) + \la \A(\ed u, \cdot)^\sharp , v\ra  \in \R^d\otimes \T^\ast \R^m. 
\end{equation}
Rivi\`ere's interest in such an $\om$ is that weakly harmonic maps $u$ solve \eqref{eq:omeq} for this specific $\om$, ($\Dl u = \om\cdot \ed u = \A(\ed u, \ed u)$) and thus the regularity theory for harmonic maps may be reduced to the study of systems in the form \eqref{eq:omeq}. In this article we show that $\om$ as defined in \eqref{eq:omdef} is significant even for arbitrary (i.e. not necessarily harmonic) $u$.

\subsection{Moving frames along Sobolev maps}
The first half of this article concerns \emph{arbitrary} maps $u\in W^{1,2}(B_1^m,\cN)$ and the related $\om$ defined in \eqref{eq:omdef}. We equip $u^\ast (\T \R^d)=B_1^m \times \R^d$ with the connection $\D^\om = \ed + \om$. We uncover some fundamental structure equations for this induced connection which, when combined with Coulomb-frame methods, contain Wente terms. Utilising the $\h^1-BMO$ duality of Fefferman-Stein \cite{FSt72} we have the following:    

\begin{theorem} \label{main}
Let $\cN\emb \R^d$ be $C^2$-Riemannian manifold, $u\in W^{1,2}(B_1^m,\cN)$ and $\om$ be defined by \eqref{eq:omdef}. There exist $\eps = \eps (m,d)>0$, $C=C(m,d)<\infty$ so that if
\[
\|\om\|_{M^{2,m-2}(B_1)}:=\sup_{x \in B_1^m, r > 0} 
 \left(r^{2-m} \int_{B_r(x) \cap B_1^m}  |\om|^2   \, dx \right)^{1/2} < \varepsilon,
\]
then $u^\ast \T\cN$ and $u^\ast \V\cN$ are both trivial in the sense that there exist $$\{e_i\}_{i=1}^n, \{\nu_j\}_{j=n+1}^d \In W^{1,2}(B_1,\R^d)$$ so that $\{e_i(x)\}$ resp. $\{\nu_j(x)\}$ form orthonormal bases of $\T_{u(x)}\cN$ resp. $\V_{u(x)}\cN$ for a.e. $x\in B_1$. Furthermore, $\{e_i\}$ resp. $\{\nu_j\}$ are finite-energy Coulomb frames in the sense that for all relevant $i,j$ we have
$$\ed^\ast(e_i\cdot\ed e_j)=0, \quad \ed^\ast(\nu_i\cdot\ed \nu_j)=0 $$
and $$\max_{i,j}\{\|\ed e_i\|_{M^{2,m-2}(B_1)},\|\ed \nu_j\|_{M^{2,m-2}(B_1)}\}\leq C\|\om\|_{M^{2,m-2}(B_1)}.$$
\end{theorem}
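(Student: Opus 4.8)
The plan is to construct the frames via the classical Coulomb-gauge fixed-point/variational scheme of Hélein, Bethuel, and Rivière–Struwe, but driven by the connection $\D^\om=\ed+\om$ rather than the Levi-Civita connection pulled back along $u$. First I would record the basic algebraic fact that $\om$ as defined in \eqref{eq:omdef} is the connection form, in ambient $\R^d$-coordinates, of a metric connection on $u^\ast(\T\R^d)$ which preserves the splitting $u^\ast\T\cN\oplus u^\ast\V\cN$ (this should follow from \eqref{eq:omv} together with the symmetries $\A(X,Y)=\A(Y,X)$ and $\A(X,\cdot)\in\V\cN$, and will have been established in the ``structure equations'' referred to in the introduction). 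Consequently $\om$ decomposes into a block-diagonal part acting on $\T\cN$ and on $\V\cN$ separately, and it suffices to trivialise each sub-bundle; I would treat $u^\ast\T\cN$ in detail, the normal bundle being identical. The curvature of $\D^\om$ restricted to these sub-bundles is a pointwise-algebraic expression in $\A$, $\ed_\cN\A$ and $\ed u$, hence controlled by $|\ed u|^2$; this is exactly the ingredient needed to run the Coulomb construction.

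Next I would set up the gauge-fixing. On a ball $B_r(x)\subset B_1$ one starts from \emph{any} measurable orthonormal frame $\tilde e=(\tilde e_1,\dots,\tilde e_n)$ of $u^\ast\T\cN$ with $\ed\tilde e\in L^2$ (such a frame exists locally since $u^\ast\T\cN$ is a sub-bundle of a trivial bundle of low regularity; one can take, e.g., a smooth orthonormal frame on $\cN$ near a Lebesgue point of $u$ restricted to a small ball, then Gram–Schmidt), and minimises the Dirichlet energy $\int |R^{-1}\ed R + R^{-1}(\om\,\lrcorner\,\cdot)R|^2$ — more precisely one minimises $\int|\ed(R\tilde e)|^2$-type functionals — over $R\in W^{1,2}(B_r,\SO(n))$. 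A minimiser $e=R\tilde e$ satisfies the Coulomb condition $\ed^\ast(e_i\cdot\ed e_j)=0$ (equivalently $\ed^\ast\alpha=0$ for the $\so(n)$-valued one-form $\alpha_{ij}=e_i\cdot\ed e_j$), which is the Euler–Lagrange equation. The existence of the minimiser is standard by direct methods once one has one competitor of finite energy; the key point, as in Rivière–Struwe, is the \emph{a priori estimate} under the Morrey smallness of $\om$.

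The heart of the matter is the estimate $\|\ed e\|_{M^{2,m-2}(B_1)}\le C\|\om\|_{M^{2,m-2}(B_1)}$, and this is where the $\h^1$–BMO duality enters. Writing $\alpha_{ij}=e_i\cdot\ed e_j$, the Coulomb condition gives $\ed^\ast\alpha=0$, and on the other hand the structure equations for $\D^\om$ express $\ed\alpha$ (the curvature of the trivial connection in the frame $e$) in terms of the curvature of $\D^\om$ plus $[\om,\cdot]$-terms; since the former is $O(|\ed u|^2)=O(|\om|^2)$ pointwise and $\alpha$ itself is bounded by $|\ed e|$, one gets a Hodge system $\Dl \beta = \ed\alpha$ (with $\alpha=\ed\beta+\text{harmonic}$, $\beta\in W^{1,2}$) whose right-hand side is a sum of a Wente-type div–curl term $\ed^\perp(\cdot)\cdot\ed(\cdot)$ — controllable in $\h^1$ by CLMS \eqref{eq:omeq}-style arguments and estimated against $\|\om\|_{BMO}$-type quantities via Fefferman–Stein — and lower-order terms absorbed by smallness. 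Running this on every ball and using the Morrey-Dirichlet growth lemma of Rivière–Struwe (Morrey decay propagates from the smallness hypothesis) upgrades the $L^2$-control to the Morrey norm $M^{2,m-2}$ with the asserted constant, and simultaneously gives $\ed e\in M^{2,m-2}$, in particular $e\in W^{1,2}(B_1,\R^d)$. The same argument verbatim on $u^\ast\V\cN$ produces $\{\nu_j\}$.

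The main obstacle I anticipate is not the existence of the Coulomb minimiser but the clean derivation of the structure equation identifying $\ed\alpha$ with a genuine Wente term modulo smallness: one must show that the ``bad'' part of the curvature of $\D^\om$ — which a priori is only in $L^1$ — in fact sits in $\h^1$ with norm controlled by $\|\om\|_{M^{2,m-2}}^2$, and that the cross terms $[\om,\ed e]$ can be reabsorbed into the left-hand side once $\eps$ is small. This requires exploiting the precise antisymmetric/divergence structure of $\om$ coming from \eqref{eq:omdef} (it is here that the specific form of Rivière's $\om$, and the structure equations, are indispensable — a generic $\so(d)$-valued $\om\in M^{2,m-2}$ would not suffice), and then pairing against $BMO$ test functions in the Fefferman–Stein duality to close the estimate uniformly in the radius; the Morrey iteration à la Rivière–Struwe then completes the bootstrap to a global finite-energy Coulomb frame on all simply connected $B_1$.
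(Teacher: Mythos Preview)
Your proposal has a genuine gap at the very first step: you assume the existence of a $W^{1,2}$ orthonormal frame $\tilde e$ for $u^\ast\T\cN$ on which to run the $\SO(n)$-Coulomb minimisation, but producing such a frame \emph{is} the content of the theorem. Your suggested construction (pull back a smooth local frame on $\cN$ near a Lebesgue value of $u$) fails for $m\ge 3$: Morrey smallness of $\ed u$ only places $u$ in $BMO$, not $C^0$, so there is no ball $B_r(x)\subset B_1$ on which $u$ is guaranteed to take values in a trivialising neighbourhood of $\T\cN$. Without a starting competitor of finite energy, the direct method gives you nothing, and the subsequent Hodge/Wente estimate scheme never gets off the ground. (Also, a minor algebraic point: $\om$ is block \emph{off}-diagonal, not block-diagonal --- for $v$ tangent, $\om v=-\A(v,\ed u)$ is normal, and vice versa. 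The connection $\D^\om$ does preserve the splitting, but this is because $\ed\T$ and $\om$ cancel, not because $\om$ itself restricts.)

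The paper sidesteps this obstruction entirely by working on the \emph{ambient} trivial bundle $u^\ast\T\R^d=B_1\times\R^d$, where a starting frame is free. One applies the Rivi\`ere--Struwe $\SO(d)$-Coulomb gauge directly to $\om$, obtaining $P\in W^{1,2}(B_1,\SO(d))$ with $P^{-1}\ed P+P^{-1}\om P=\ed^\ast\xi$ and the Morrey bound on $\ed P$. The key new idea is then that $\cR=\T-\V$ is $\D^\om$-parallel (since $\om=\tfrac12\cR^{-1}\ed\cR$), so $Q:=P^{-1}\cR P$ satisfies $\ed Q=[Q,\ed^\ast\xi]$; this has a clean Wente structure and the $\h^1$--$BMO$ duality forces $\|\ed Q\|_{L^2}^2\le C\eps\|\ed Q\|_{L^2}^2$, hence $Q$ is constant. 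Thus $P$ conjugates $\cR$ to a fixed reflection, which means the columns of $P$ \emph{automatically} split into tangent and normal frames --- no sub-bundle frame was ever needed as input. Your anticipated difficulty (showing the curvature of $\D^\om$ restricted to the sub-bundle lies in $\h^1$) never arises in this approach.
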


\begin{remark}
	We may conclude the same result, assuming $\|\A\|_{L^\infty(N)}$ is finite (e.g. if $\cN$ is closed), under a smallness condition on $\|\ed u\|_{M^{2,m-2}}$ except both $\eps$ and $C$ would depend additionally on $\|\A\|_{L^\infty(N)}$.  We note that it is not true that a small bound on $\|\ed u\|_{M^{2,m-2}}$ implies that the image of $u$ is contained in a small region e.g. when $m=2$ let $\gamma:[0,\infty)\to \cN$ be any Lipschitz curve with $|\gamma^\prime|=1$ almost everywhere, and consider $\gamma\circ u:B_1\to \cN$ for $u(x) = \eps\log\log(e|x|^{-1})$. However the result above implies that a certain amount of $\|\ed u\|_{M^{2,m-2}}$ is required for the image to ``wrap around'' enough for $u^\ast \T\cN$ to be non-trivial e.g. $u:B_1^3\to S^2$ defined by $u(x) = \frac{x}{|x|}$ satisfies $r^{-1}\|\ed u\|^2_{L^2(B_r(0))}=8\pi$ for all $r>0$ and of course the pulled-back tangent bundle is not trivial in this case.  
\end{remark}

\begin{remark}\label{rmk:h}
We will see below that $\om = \fr12 \cR^{-1}\ed \cR$ where $\cR:B_1\to \Or(d)$ can be interpreted as $G\circ u$ and $G:\cN\to G(n,d)\emb \Or(d)$ is the Gauss map of $\cN$ (see Remark \ref{rmk:totgeo}). Thus the theorem above can be compared directly with Lemma 5.1.4 in \cite{helein_conservation} and be thought of as an extension of this to higher dimensions - see Theorem \ref{thm:gen} below. 
\end{remark}
In light of the above Remarks we pose the following 

\begin{q}
Does the above Theorem remain true if instead of imposing that  $\|\om\|_{M^{2,m-2}}$ or $\|\ed u\|_{M^{2,m-2}}$ is small, we instead require 
$$[\cR]_{BMO(B_1)}:=\sup_{B_r(x)\In B_1} \left(r^{-m}\int_{B_r(x)} |\cR - \overline\cR_{r,x}|^2 \right)^\fr12<\eps, \qquad \overline\cR_{r,x}:=\fr{1}{|B_r(x)|}\int_{B_r(x)} \cR$$
or indeed $[u]_{BMO(B_1)}<\eps$. Of course the best one could hope for is that we end up with an equivalent bound on $[e_i]_{BMO(B_1)}$ and $[\nu_j]_{BMO(B_1)}$ in terms of $[\cR]_{BMO(B_1)}$. 
\end{q}

Theorem \ref{main} is a special case of the following 
\begin{theorem}\label{thm:gen}
    Suppose that $\Pi\in W^{1,2}(B_1^m,\mathfrak{gl}(d))$ is a projection (i.e. $\Pi^2=\Pi$ and $\mathrm{rank}(\Pi)=n$ a.e.), equivalently suppose that $\Pi\in W^{1,2}(B_1,G(n,d))$. Define $\om_\Pi:= \Pi \ed \Pi - \ed \Pi \Pi$.  There exist $\eps = \eps (m,d)>0$, $C=C(m,d)<\infty$ so that if $
\|\om_\Pi\|_{M^{2,m-2}(B_1)} < \varepsilon$
then there exist $$\{e_i\}_{i=1}^n, \{\nu_j\}_{j=n+1}^d \In W^{1,2}(B_1,\R^d)$$ so that $\{e_i(x)\}$ resp. $\{\nu_j(x)\}$ form orthonormal bases of $\Pi(x)\R^d$ resp. $\Pi^\bot(x)\R^d$ for a.e. $x\in B_1$. Furthermore, $\{e_i\}$ resp. $\{\nu_j\}$ are finite-energy Coulomb frames in the sense that for all relevant $i,j$ we have
$$\ed^\ast(e_i\cdot\ed e_j)=0, \quad \ed^\ast(\nu_i\cdot\ed \nu_j)=0 $$
and $$\max_{i,j}\{\|\ed e_i\|_{M^{2,m-2}(B_1)},\|\ed \nu_j\|_{M^{2,m-2}(B_1)}\}\leq C\|\om\|_{M^{2,m-2}(B_1)}.$$ 
In the above $\Pi^\bot(x):=\Id - \Pi$ is the projection onto the orthogonal complement. 
\end{theorem}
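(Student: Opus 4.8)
The plan is to reduce everything to the \emph{Coulomb gauge} for the Rivi\`ere connection, together with two structure equations. Set $\cR:=2\Pi-\Id$; then $\cR=\cR^{\T}=\cR^{-1}\in W^{1,2}(B_1,\Or(d))$, $\ed\cR=2\ed\Pi$, and a short computation gives $\om_\Pi=\tfrac12\cR^{-1}\ed\cR$, so in particular $\om_\Pi$ is $\so(d)$-valued and $\|\ed\cR\|_{M^{2,m-2}}=2\|\om_\Pi\|_{M^{2,m-2}}$. Differentiating $\Pi^2=\Pi$ yields $\Pi\,\ed\Pi\,\Pi=0$, whence
\[ \ed\Pi+[\om_\Pi,\Pi]=0,\qquad \ed\om_\Pi+\om_\Pi\wedge\om_\Pi=-\,\om_\Pi\wedge\om_\Pi : \]
$\Pi$ is parallel for $\ed+\om_\Pi$, and the curvature of this connection is a purely quadratic Wente-type term. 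Finally, the conclusion is equivalent to producing $F\in W^{1,2}(B_1,\Or(d))$ with $F^{-1}\Pi F\equiv\Pi_0$ (the constant projection onto $\R^n\times\{0\}$), with $\|\ed F\|_{M^{2,m-2}}\le C\|\om_\Pi\|_{M^{2,m-2}}$, and with the two diagonal blocks of $F^{-1}\ed F$ divergence-free: the columns $\{e_i\}_{i\le n}$ and $\{\nu_j\}_{j>n}$ of such an $F$ are then the desired frames, since $F$ orthogonal gives $(F^{-1}\ed F)_{ij}=e_i\cdot\ed e_j$ for $i,j\le n$ and $=\nu_i\cdot\ed\nu_j$ for $i,j>n$.

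First I would gauge-fix $\om_\Pi$ using the Coulomb construction of Rivi\`ere--Struwe \cite{RS08} (due to Rivi\`ere \cite{riviere_inventiones} when $m=2$): since $\|\om_\Pi\|_{M^{2,m-2}}<\eps$ there is $P\in W^{1,2}(B_1,\SO(d))$ with $\|\ed P\|_{M^{2,m-2}}\le C\|\om_\Pi\|_{M^{2,m-2}}$ such that $\xi:=P^{-1}\ed P+P^{-1}\om_\Pi P$ obeys $\ed^{\ast}\xi=0$ (with the appropriate condition on $\de B_1$) and $\|\xi\|_{M^{2,m-2}}\le C\|\om_\Pi\|_{M^{2,m-2}}$. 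Put $\widetilde\Pi:=P^{-1}\Pi P\in W^{1,2}(B_1,G(n,d))$. Gauge-covariance of the first structure equation gives $\ed\widetilde\Pi=[\widetilde\Pi,\xi]$, hence $\|\ed\widetilde\Pi\|_{M^{2,m-2}}\le C\|\om_\Pi\|_{M^{2,m-2}}$, and differentiating once more and using $\ed^{\ast}\xi=0$,
\[ -\Dl\widetilde\Pi=\sum_i\big[\xi_i,[\widetilde\Pi,\xi_i]\big]. \]
Feeding this system into the $\eps$-regularity iteration for systems with small Morrey norm -- the co-closedness of $\xi$ supplying the div--curl structure needed to apply the Fefferman--Stein $\h^1$--$BMO$ duality \cite{FSt72} to the quadratic right-hand side -- one upgrades the a priori bound on $\ed\widetilde\Pi$ to a Dirichlet decay $\sup_{B_r(x)} r^{2-m}\int_{B_r(x)\cap B_1}|\ed\widetilde\Pi|^2\le C r^{2\mu}$ for some $\mu\in(0,1)$; in particular $\widetilde\Pi\in C^{0,\mu}\cap W^{1,2}(B_1,G(n,d))$.

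Since $\widetilde\Pi$ is now continuous and $B_1$ is simply connected, I lift through $\Or(d)\to G(n,d)=\Or(d)/(\Or(n)\times\Or(d-n))$: a standard intertwiner construction on small balls where $\widetilde\Pi$ stays near a constant projection, patched using simple connectivity, produces $S\in C^0\cap W^{1,2}(B_1,\Or(d))$ with $S^{-1}\widetilde\Pi S\equiv\Pi_0$, and since $S$ is obtained from $\widetilde\Pi$ by locally smooth algebraic operations one has $\|\ed S\|_{M^{2,m-2}}\le C\|\om_\Pi\|_{M^{2,m-2}}$. Then $F_0:=PS$ satisfies $F_0^{-1}\Pi F_0\equiv\Pi_0$ and $\|\ed F_0\|_{M^{2,m-2}}\le C\|\om_\Pi\|_{M^{2,m-2}}$; moreover $\ed(F_0^{-1}\Pi F_0)=0$ forces the off-diagonal block of $F_0^{-1}\ed F_0$ to equal $-F_0^{-1}\om_\Pi F_0$, which already has Morrey norm $\le\|\om_\Pi\|_{M^{2,m-2}}$, so only the two diagonal blocks remain to be put in Coulomb gauge. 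For that I act by $Q=\mathrm{diag}(Q_1,Q_2):B_1\to\Or(n)\times\Or(d-n)$: block-diagonal conjugation leaves the off-diagonal block unchanged in norm and replaces the (antisymmetric, already small) $k$-th diagonal block $\alpha_{kk}$ of $F_0^{-1}\ed F_0$ by $Q_k^{-1}\alpha_{kk}Q_k+Q_k^{-1}\ed Q_k$; applying the small-Morrey Coulomb construction of \cite{RS08} in $\so(n)$ and $\so(d-n)$ yields $Q_1,Q_2$ making these diagonal blocks divergence-free with $\|\ed Q_k\|_{M^{2,m-2}}\le C\|\om_\Pi\|_{M^{2,m-2}}$. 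Then $F:=F_0Q$ works: $F^{-1}\Pi F\equiv\Pi_0$ since $Q$ commutes with $\Pi_0$, the identities $\ed^{\ast}(e_i\cdot\ed e_j)=0$ and $\ed^{\ast}(\nu_i\cdot\ed\nu_j)=0$ are exactly the two diagonal-block conditions, and $\|\ed F\|_{M^{2,m-2}}\le C\|\om_\Pi\|_{M^{2,m-2}}$. Theorem \ref{main} then follows by identifying (Remark \ref{rmk:h}) the $\om$ of \eqref{eq:omdef} with $\om_\Pi$ for $\Pi$ the orthogonal projection onto $\T_{u(x)}\cN$ (and $\V\cN$ with $\Pi^{\bot}$).

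\textbf{The main obstacle} is the passage from a merely $W^{1,2}$ projection $\Pi$, which for $m\ge3$ need not be continuous and for which an adapted frame need not obviously exist, to the continuous $\widetilde\Pi$ with \emph{quantitative} Morrey/Dirichlet decay: this is where the structure equations, the Coulomb condition $\ed^{\ast}\xi=0$, and the $\h^1$--$BMO$ improvement of the quadratic right-hand side must be combined, and where the bookkeeping -- the boundary conditions in the gauge-fixing step, and the fact that the topological lift $S$ inherits the decay of $\widetilde\Pi$ -- requires the most care. Once that is in place, the remaining steps are essentially algebraic.
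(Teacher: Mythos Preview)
Your proposal is correct but takes a genuinely different — and longer — route than the paper. Both arguments start from the same structure equations ($\Pi$ parallel for $\ed+\om_\Pi$, equivalently $\cR=2\Pi-\Id$ parallel) and the same Rivi\`ere--Struwe Coulomb gauge $P$, but then diverge. You pass to $\tilde\Pi=P^{-1}\Pi P$, differentiate once more to obtain a Rivi\`ere-type system $-\Delta\tilde\Pi=[\xi,\ed\tilde\Pi]$, invoke $\eps$-regularity to upgrade to H\"older continuity, lift through the fibration $\Or(d)\to G(n,d)$ to produce a frame $S$, and finally apply a \emph{second} round of Coulomb gauge on the two diagonal blocks. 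The paper instead works directly with $Q=P^{-1}\cR P$: from $\ed Q=[Q,\ed^{\ast}\xi]$ it pairs the equation with $\ed Q$ itself, and after an integration by parts the right-hand side is a pure Wente term $\ast\xi\wedge\ed Q\wedge\ed Q$; a single application of the $\h^1$--$BMO$ duality gives $\|\ed Q\|_{L^2}^2\le C\eps\|\ed Q\|_{L^2}^2$, hence $Q$ is \emph{constant}. This immediately trivialises both sub-bundles via $e_i(x)=P(x)P(0)^{T}E_i$, $\nu_j(x)=P(x)P(0)^{T}N_j$, and the Coulomb conditions on $\{e_i\}$ and $\{\nu_j\}$ come for free because the tangent--tangent and normal--normal blocks of $P^{-1}\om_\Pi P$ vanish algebraically ($\om_\Pi$ maps tangent vectors to normal vectors and vice versa).

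Your approach is more modular, reducing to black-box results (Rivi\`ere--Struwe regularity, topological lifting, another Coulomb gauge), while the paper's is shorter and avoids both the regularity bootstrap and the lifting altogether; the step you flag as the main obstacle — quantitative control on the lift $S$ — simply does not arise, since constancy of $Q$ makes $P$ itself (up to a fixed rotation) the lift. The paper's route also yields an explicit constant when $m=2$ via the sharp $L^2$ Wente inequality, which your bootstrap would not give.
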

\begin{remark}
    As mentioned in Remark \ref{rmk:h}, when $m=2$ this result recovers \cite[Lemma 5.1.4]{helein_conservation} with a new proof which works for all $m$. We also recover (when $m=2$) $\eps=\sqrt{\tfrac{4\pi}{3}}$ using the optimal $L^2$-Wente estimates proved by Ge \cite{G98}, see Remark \ref{rmk:m=2} for the necessary changes in the proof.  This constant is equivalent to the $\sqrt{\frac{8\pi}{3}}$ appearing in \cite{helein_conservation}, the discrepancy is due to a differnent choice of metric on $G(n,d)$. 
\end{remark}

\paragraph{Outline of the Proof of Theorem \ref{main}} The proof relies on a series of observations concerning the geometry of the connection $\D^\om$ which we expect to be of interest in their own right.  

Let $\End(\T\R^d)$ denote the endomorphism bundle restricted to $\cN$ and $\tilde{\T}\in \Gamma(\End(\T\R^d))$ be the projection onto the tangent space of $\cN$: $\tilde{\T}(z) = \Pi_{\T_z\cN}$. Similarly $\tilde{\V}\in\Gamma(\End(\T\R^d))$ projection onto the normal space $\tilde{\V}(z) = \Pi_{\V_z\cN}$. Denote by $\T=\tilde{\T} \circ u \in \Gamma(u^\ast\End(\T\R^d))$, $\V = \tilde{\V}\circ u \in \Gamma(u^\ast\End(\T\R^d))$ and $\cR=\T-\V\in \Gamma(u^\ast\End(\T\R^d))$ where we note that $\cR$ is of course orthogonal with $\cR=\cR^{-1}=\cR^T$. In fact we have the following identities for $\om$ (see Lemma \ref{lem:idom}):
$$\om = \fr12 \cR^{-1}\ed \cR = \T \ed \T - \ed \T \T.$$

 The connection $\D^\om$ on $u^\ast (\T \R^d)$ induces a connection on $u^\ast \End(\T\R^d)$ and we show that $\T$, $\V$ and $\cR$ are all parallel sections (see Lemma \ref{lem:parallel}), in particular: 
$$\D^\om \cR = \ed \cR + [\om ,\cR] = 0.$$
Since $\|\om\|_{M^{2,m-2}}$ is small, we utilise the Coulomb frame constructed by Rivi\`ere-Struwe \cite{RS08} and find a new frame $P$ making the new connection forms divergence free. At which point the gauge-invariant form of the above allows us to write, for $Q=P^{-1}\cR P$: 
$$\ed Q=-[\ed^\ast \xi,Q].$$
Testing this equation with $\ed Q$ gives a Wente structure on the right hand side and we may employ $\h^1$-$BMO$ duality to show that $\|\ed Q\|_{L^2(B_1)}^2\leq C\eps\|\ed Q\|_{L^2(B_1)}^2$ i.e. when $\eps$ is sufficently small $Q=P^{-1}\cR P$ is a constant. In particular this yields that $P$ itself simultaneously trivialises both the pulled-back tangent and normal bundles. The full details are given in Section \ref{sec:p1}.

\subsection{The regularity of harmonic maps into homogeneous targets}
As discussed in the opening weakly harmonic maps are regular in a neighbourhood of a point $p$ if $\|\ed u\|_{M^{2,m-2}(B_R(p))}$ is sufficiently small for some radius $R>0$. By the scale invariance of this norm, this condition may be stated in an equivalent way: as long as all approximate tangent maps, $\hat{u}_{r,q}(x) = u(q+rx)$, for $q\in B_R(p)$ and all $r<R-|p-q|$, are locally $W^{1,2}$-close to a constant, then $u$ is regular near $p$.

In the case of harmonic maps into homogeneous targets, Helein \cite{helein_div_free} utilised Noether's theorem to show that pure Wente structures naturally appear using the presence of a moving frame constructed via the isometries acting on $\cN$ (and not via Coulomb gauge methods). In the second part of this article we note that weakly harmonic maps into a homogeneous target are regular in a neighbourhood of a point $p$ if 
$$[u]_{BMO(B_R(p))} :=\sup_{B_r(x)\In B_R(p)} 
 \left( r^{-m}\int_{B_r(x)}  |u- \overline{u}_{r,x}|^2  \id x \right)^{1/2}, \qquad \overline{u}_{r,x}:=\fr{1}{|B_r(x)|}\int_{B_r(x)} u$$ is sufficiently small - i.e. when $\cN$ is homogeneous we may replace a first order condition on the map $u$ ($\|\ed u\|_{M^{2,m-2}}$ small) with a zeroth order one ($[u]_{BMO}$ small) to achieve full regularity. As above, we may re-state this: for weakly harmonic maps into homogeneous targets, if all approximate tangent maps in a neighbourhood of $p$ are locally $L^2$-close to a constant, then $u$ is regular near $p$. 

\begin{theorem}\label{thm:bmoreg}
Suppose that $\cN^n\emb \R^d$ is a closed homogeneous Riemannian manifold isometrically embedded in Euclidean space and equip $B_1^m$ with a smooth metric $g$. Then there exists $C=C(m,g,\cN)<\infty$ so that if $u:(B_1,g)\to\cN$ is weakly harmonic then  
$$\|\D^2 u\|_{L^1(B_{1/2})}\leq C\|\D u\|_{L^2}^2.$$ Furthermore there exists $\eps>0=\eps(m,g,\cN)>0$ so that for any weakly harmonic map $u:(B_1,g)\to\cN$ satisfying 
$[u]_{BMO(B_1)}<\eps$
then 	
 $$\|\D u\|_{L^\infty(B_{1/2})}\leq C\|u-\overline{u}\|_{L^1(B_1)}\leq C[u]_{BMO(B_1)}.$$
\end{theorem}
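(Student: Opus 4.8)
The plan is to exploit that harmonic maps into homogeneous targets carry an \emph{intrinsic} div-curl structure --- built, after H\'elein \cite{helein_div_free}, from Noether's theorem applied to the Killing fields of $\cN$ --- and to couple it with the Fefferman-Stein $\mathfrak h^1$-$BMO$ duality \cite{FSt72} already used in the first half of the paper, so as to trade a Morrey smallness hypothesis for the $BMO$ one.

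First I would record the homogeneous structure. Write $\cN=G/H$ with $G$ the identity component of $\mathrm{Isom}(\cN)$ (compact, since $\cN$ is closed), fix an $\mathrm{Ad}$-invariant inner product on $\mathfrak g$ and let $Y_1,\dots,Y_L$ be the Killing fields generated by an orthonormal basis of $\mathfrak g$; transitivity gives $\mathrm{span}\{Y_a(y)\}=\T_y\cN$ for all $y$, so the Gram matrix $M_{ab}(y)=\la Y_a(y),Y_b(y)\ra$ has constant rank $n$ and its pseudo-inverse $M^+(\cdot)$ is smooth. Setting $Z_a:=\sum_b M^+_{ab}Y_b\in\Gamma(\T\cN)$ one has $v=\sum_a\la v,Y_a(y)\ra Z_a(y)$ for $v\in\T_y\cN$, hence (applying this to $v=\de_l u\in\T_u\cN$) $\de_l u^i=\sum_a Z_a^i(u)\,C^a_l$ with $C^a_l:=\la\de_l u,Y_a(u)\ra$. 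Since each $Y_a$ is Killing and $u$ is weakly harmonic, Noether's theorem gives $\ed^\ast_g C^a=0$, i.e. $\tilde C^a_k:=\sqrt g\,g^{kl}C^a_l$ is Euclidean divergence free; substituting into $\sqrt g\,\Dl_g u^i=\de_k(\sqrt g\,g^{kl}\de_l u^i)$ and using $\de_k\tilde C^a_k=0$ leaves
\[
\sqrt g\,\Dl_g u^i=\sum_{a}\D(Z_a^i\circ u)\cdot\tilde C^a,
\]
a finite sum of products of an $L^2$ gradient with a divergence-free $L^2$ field, each factor pointwise $\lesssim|\D u|$.

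For the first assertion, the (localised) Coifman-Lions-Meyer-Semmes estimate \cite{CLMS93} puts the right-hand side in the local Hardy space with $\|\sqrt g\,\Dl_g u^i\|_{\mathfrak h^1(B_\sigma)}\le C\|\D u\|_{L^2(B_{2\sigma})}^2$, and Calder\'on-Zygmund theory for $\Dl_g$ on $\mathfrak h^1$ (smooth, hence $VMO$, coefficients) upgrades $u$ to $W^{2,1}_{\mathrm{loc}}$ with the asserted bound; no smallness is needed. For the second, assume $[u]_{BMO(B_1)}<\eps$ and test the weak equation with $\phi^2(u-\ol u_{2r,x})$, $\phi\equiv1$ on $B_r(x)$ and $\mathrm{supp}\,\phi\subset B_{2r}(x)\cemb B_1$; absorbing $\tfrac12\int\phi^2|\D u|^2$,
\[
\int_{B_r(x)}|\D u|^2\le \frac{C}{r^2}\int_{B_{2r}(x)}|u-\ol u_{2r,x}|^2+C\Big|\int_{B_{2r}(x)}\phi^2\,\la u-\ol u_{2r,x},\,\Dl_g u\ra\Big|.
\]
The first term is $\le C[u]_{BMO}^2\,r^{m-2}$. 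For the second I insert the div-curl identity and apply Fefferman-Stein duality: the div-curl term has $\mathfrak h^1$-norm $\le C\|\D u\|_{L^2(B_{2r}(x))}^2$, while $\phi^2(u^i-\ol u^i_{2r,x})$ has $BMO(\R^m)$-seminorm $\le C[u]_{BMO(B_1)}$ \emph{uniformly in $r$} (the only point is that averages of $u^i-\ol u^i_{2r,x}$ over sub-balls $B_\sigma$ grow only like $\log(2r/\sigma)$, which is dominated by the factor $\sigma\|\D(\phi^2)\|_\infty$ from the cutoff). Multiplying by $r^{2-m}$ gives
\[
r^{2-m}\!\!\int_{B_r(x)}\!|\D u|^2\le C[u]_{BMO}^2+C_1[u]_{BMO}\,(2r)^{2-m}\!\!\int_{B_{2r}(x)}\!|\D u|^2\qquad\text{for all }B_{2r}(x)\cemb B_1.
\]
Iterating this in the radius, and feeding in the global bound $\|\D u\|_{L^2(B_{3/4})}^2\le C[u]_{BMO(B_1)}^2$ --- obtained from the same Caccioppoli$+$duality inequality plus the Giaquinta absorption lemma, using $C[u]_{BMO}<1$ --- yields $\|\D u\|_{M^{2,m-2}(B_{1/2})}\le C[u]_{BMO(B_1)}$.

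Choosing $\eps$ so that $C\eps$ is below the threshold of the Bethuel \cite{Be93} / Rivi\`ere-Struwe \cite{RS08} $\eps$-regularity theorem, $u$ is smooth on $B_{5/8}$ with the attendant scale-invariant estimates; in particular $\|\D u\|_{L^\infty(B_{1/2})}\le C\|\D u\|_{L^2(B_{3/4})}\le C[u]_{BMO(B_1)}$. For the sharper $L^1$-dependence one uses that, $u$ now being smooth, $|\Dl_g u^i|\le C|\D u|^2\le C[u]_{BMO}|\D u|$ is small, so $w^i:=u^i-\ol u^i_{B_1}$ solves a small perturbation of $\Dl_g w^i=0$; interior $L^\infty$ estimates with the quadratic term absorbed over a covering give $\|\D u\|_{L^\infty(B_{1/2})}\le C\|u-\ol u_{B_1}\|_{L^1(B_1)}$, and $\|u-\ol u_{B_1}\|_{L^1(B_1)}\le C[u]_{BMO(B_1)}$ is immediate from the definition of the seminorm. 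The heart of the matter --- and the step I expect to be most delicate --- is the Caccioppoli$+\mathfrak h^1$-$BMO$ mechanism in the Morrey step: one has to localise the CLMS estimate correctly and verify that the truncations $\phi^2(u^i-\ol u^i_{2r,x})$ have $BMO$-seminorm controlled by $[u]_{BMO}$ \emph{uniformly across scales}, which is precisely what renders the dangerous term absorbable.
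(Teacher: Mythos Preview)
Your argument is essentially correct and reaches the same conclusion, but via a genuinely different mechanism from the paper's proof. The paper does not test with $\phi^2(u-\overline u)$ at all: instead it observes that the duality should be applied with $Y^j(u)$ as the $BMO$ factor. Concretely, after writing $X_j=\ed^\ast\xi_j$ by Hodge theory on $B_1$ and extending $\xi_j$ and $Y^j(u)-\overline{Y^j(u)}$ to $\R^m$, an integration by parts against an arbitrary test function $\phi$ gives
\[
\int X_j\cdot\ed Y^j\,\phi=\int(\ed\ast\tilde\xi_j\wedge\ed\phi)\,\tilde Y^j,
\]
so CLMS\,+\,duality bounds the right-hand side by $C\|\ed u\|_{L^2}[u]_{BMO}\|\ed\phi\|_{L^2}$, i.e.\ the nonlinearity lies in $H^{-1}$ with norm $\lesssim\|\ed u\|_{L^2}[u]_{BMO}$. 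This feeds a harmonic decomposition $u=h+v$ \`a la Sharp--Topping \cite{ST13}: the monotonicity of $r^{-m}\|\ed h\|_{L^2(B_r)}^2$ produces the decay
\[
\|\ed u\|_{L^2(B_r)}^2\le\big((1+\dl)r^m+C_\dl[u]_{BMO}^2\big)\|\ed u\|_{L^2(B_1)}^2,
\]
which iterates to a \emph{sub-critical} Morrey bound $\|\ed u\|_{M^{2,m-\al}(B_{3/4})}\le C\|u\|_{L^1}$ for some $\al<2$; Adams' Riesz-potential estimate then gives $\ed u\in L^q$ for some $q>2m$, and one finishes by Calder\'on--Zygmund rather than invoking \cite{Be93,RS08}. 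Your route --- Caccioppoli with the cutoff test function, a scale-uniform $BMO$ bound on $\phi^2(u-\overline u_{2r,x})$, iteration to the \emph{critical} space $M^{2,m-2}$, then the black-box $\eps$-regularity --- is cleaner conceptually and makes the role of the $BMO$ hypothesis more transparent, at the cost of importing the full Bethuel/Rivi\`ere--Struwe machinery; the paper's route is more self-contained and yields the $L^1$-dependence $\|\ed u\|_{L^\infty}\le C\|u-\overline u\|_{L^1}$ directly from the iteration, without the post-hoc absorption argument you sketch at the end. The delicate step you flag (uniform $BMO$ control on the truncated test function) is indeed the crux of your approach and is correct via the $\log$-growth-of-averages argument you indicate; the paper sidesteps it entirely by placing the $BMO$ weight on the smooth lift $Y^j(u)$ instead of on $u$, which is why \eqref{eq:Y} records $[Y^j(u)]_{BMO}\le L[u]_{BMO}$.
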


Our starting point for the proof is a result of H\'elein \cite[Lemmata 1 and 2]{helein_div_free} which uses the homogeneity of $\cN$ and Noether's theorem to uncover Wente terms in the harmonic map PDE, see also \cite{BR25} for a new interpretation of this via equivariant embeddings. From this we may infer that there exist $K=K(\cN)\in \N$, $L=L(\cN)<\infty$ so that a harmonic map $u$ as in the Theorem above solves 
\begin{equation}\label{eq:uhom}
	\Dl_g u = X_j \cdot_g \ed (Y^j(u))
\end{equation}
for  
\begin{equation}\label{eq:X}
\{X_j\}_{j=1}^K\In L^2(B_1,\T^\ast \R^m), \quad \ed^\ast_g(X_j)= 0 	\quad \text{and} \quad |X_j|\leq L|\ed u|,
\end{equation}   
and  
\begin{equation}\label{eq:Y}
\{Y^j\}_{j=1}^K\In \Gamma(\T\cN), \quad 	|\ed (Y^j(u))|\leq L|\ed u|,\quad \text{and}\quad [Y^j(u)]_{BMO(B_1)}\leq L[u]_{BMO(B_1)}.
\end{equation}

The proof of Theorem \ref{thm:bmoreg} now follows from the below, the proof of which is given in Section \ref{sec:p2}. We note that by translation in $\R^d$ we may assume that \eqref{eq:uhom}-\eqref{eq:Y} remain true for $u-\overline{u}$ in place of $u$, where $\overline{u}=\frac{1}{|B_1|}\int_{B_1} u = 0$:     

 \begin{theorem}\label{thm:bmoregsup}
    Let $1\leq m, n$ and $B_1^m \In \R^m$ the unit ball equipped with a smooth Riemannian metric $g$. Assume that $u \in W^{1,2}(B_1^m,\R^d)$ weakly solves 
    \begin{equation*} 
\Delta_g u = X_j \cdot_g \ed Y^j \quad \text{in } B_1^m
\end{equation*}
where $\{X_j\}$, $\{Y^j\}$ satisfy \eqref{eq:X} and \eqref{eq:Y} and we assume additionally that $\int_{B_1} u =0$.  Then there exists $C=C(K,L,m,g,d)$ so that $$\|\D^2 u\|_{L^1(B_{1/2})}\leq C\|\D u\|_{L^2}^2.$$ Furthermore there exits $\eps=\eps(K,L,m,g,d)>0$ so that $[u]_{BMO(B_1)}<\eps$ implies that 
$$\|\D u\|_{L^\infty(B_{1/2})}\leq C\|u\|_{L^1(B_1)}\leq C[u]_{BMO(B_1)}.$$
\end{theorem}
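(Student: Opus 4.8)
The plan is to exploit the div--curl (``Wente'') structure of the right--hand side while replacing the Morrey smallness $\|\Om\|_{M^{2,m-2}}<\eps$ of Rivi\`ere--Struwe by the \emph{zeroth order} condition $[u]_{BMO}<\eps$. I would start with two routine reductions: since $\ed^{\ast}_{g}X_{j}=0$, the Euclidean fields $\xi_{j}:=\sqrt{g}\,g^{-1}X_{j}$ are divergence free and $\de_{a}(\tilde g^{ab}\de_{b}u)=\sqrt{g}\,\Dl_{g}u=\xi_{j}\cdot\ed(Y^{j}(u))$ with $\tilde g:=\sqrt{g}\,g^{-1}$ smooth and uniformly elliptic, $|\xi_{j}|\le CL|\D u|$; writing $Y^{j}:=Y^{j}(u)$ one has $|\D Y^{j}|\le CL|\D u|$, and since $Y^{j}$ is Lipschitz on $\cN$ the bound $[Y^{j}]_{BMO(B)}\le CL[u]_{BMO(B)}$ propagates from $B=B_{1}$ to every ball $B\In B_{1}$; also $\int_{B_{1}}u=0$ gives $\|u\|_{L^{1}(B_{1})}\le C[u]_{BMO(B_{1})}$; all constants below depend only on $K,L,m,g,d$. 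For the first assertion, $\xi_{j}\cdot\ed Y^{j}$ is a sum of div--curl products ($\xi_{j}$ divergence free, $\ed Y^{j}$ exact), so the localised Coifman--Lions--Meyer--Semmes estimate (in Goldberg's local Hardy space) gives $\|\xi_{j}\cdot\ed Y^{j}\|_{h^{1}(B_{3/4})}\le C\|\D u\|_{L^{2}(B_{1})}^{2}$, and $h^{1}$--Calder\'on--Zygmund theory for the smooth elliptic operator yields $\D^{2}u\in h^{1}(B_{1/2})\subset L^{1}(B_{1/2})$ with the quadratic bound; this part uses no smallness.

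The heart is the second assertion, which I would prove by forcing a geometric decay of $\Psi(x_{0},r):=r^{2-m}\int_{B_{r}(x_{0})}|\D u|^{2}$. On $B_{r}=B_{r}(x_{0})\In B_{1}$, split $u=w+v$ with $\Dl_{g}w=0$, $w=u$ on $\de B_{r}$, so that $v\in W^{1,2}_{0}(B_{r})$ solves $\de_{a}(\tilde g^{ab}\de_{b}v)=\xi_{j}\cdot\ed Y^{j}$. Testing against $v$ and using $\de_{a}\xi_{j}^{a}=0$, $v|_{\de B_{r}}=0$, one gets $\int_{B_{r}}|\D v|_{g}^{2}\,\ed\mathrm{vol}_{g}=\int_{B_{r}}(Y^{j}-\overline{Y^{j}}_{B_{r}})(\D v\cdot\xi_{j})$; extending $v$ by zero (so $\D v$ is a genuine gradient with compact support) and $\xi_{j}$ divergence freely to $\R^{m}$, the product $\D v\cdot\xi_{j}$ is div--curl with $\|\D v\cdot\xi_{j}\|_{\h^{1}(\R^{m})}\le C\|\D v\|_{L^{2}(B_{r})}\|\xi_{j}\|_{L^{2}(B_{r})}$, so Fefferman--Stein $\h^{1}$--$BMO$ duality (against a $BMO$ extension of $Y^{j}$) yields $\|\D v\|_{L^{2}(B_{r})}\le C[u]_{BMO(B_{r})}\|\D u\|_{L^{2}(B_{r})}$. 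Since $w$ minimises Dirichlet energy and, after freezing the smooth coefficients at $x_{0}$, satisfies $\int_{B_{\ta r}}|\D w|^{2}\le C\ta^{m}\int_{B_{r}}|\D w|^{2}$ for $r$ below a threshold depending on $g$, combining gives $\Psi(x_{0},\ta r)\le(C\ta^{2}+C\ta^{2-m}[u]_{BMO(B_{1})}^{2})\Psi(x_{0},r)$. Choosing $\ta$ with $C\ta^{2}\le\tfrac14$ and then $\eps$ with $C\ta^{2-m}\eps^{2}\le\tfrac14$ forces $\Psi(x_{0},\ta r)\le\tfrac12\Psi(x_{0},r)$ on every $B_{r}(x_{0})\In B_{1}$, hence a geometric decay $\Psi(x_{0},r)\le C(r/\rho)^{2\al_{0}}\Psi(x_{0},\rho)$ for some $\al_{0}>0$, hence (Morrey's Dirichlet growth theorem) $u\in C^{0,\al_{0}}_{loc}(B_{1})$.

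From here the equation is subcritical. Writing the right--hand side as $\mathrm{div}((Y^{j}-\overline{Y^{j}}_{B_{r}})\xi_{j})$ with the now--bounded factor, Caccioppoli together with Campanato/Morrey elliptic estimates improves the H\"older exponent geometrically to every value $<1$; feeding the resulting Morrey bound for $\D u$ together with the crude pointwise bound $|\Dl_{g}u|\le CL^{2}|\D u|^{2}$ into Adams' potential estimate then gives $\D u\in C^{0,\gamma}_{loc}\subset L^{\infty}_{loc}(B_{1})$. For the quantitative bound I would first run a Caccioppoli argument in which the right--hand side is handled by $\h^{1}$--$BMO$ duality (using $\|\zeta^{2}(u-\overline u)\|_{BMO(\R^{m})}\le C[u]_{BMO(B_{1})}$) and the smallness of $[u]_{BMO(B_{1})}$ to absorb, obtaining $\|\D u\|_{L^{2}(B_{7/8})}\le C[u]_{BMO(B_{1})}$ and hence $\|\D u\|_{L^{\infty}(B_{3/4})}\le C[u]_{BMO(B_{1})}<C\eps$; then a standard absorption over radii, in which this a priori smallness linearises the quadratic nonlinearity of the right--hand side, upgrades the bound to $\|\D u\|_{L^{\infty}(B_{1/2})}\le C\|u\|_{L^{1}(B_{1})}\le C[u]_{BMO(B_{1})}$.

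The main obstacle is the mechanism in the second paragraph: recognising that smallness of the \emph{zeroth order} quantity $[u]_{BMO}$ --- and not of $\|\D u\|_{M^{2,m-2}}$, which for $m\ge 3$ need not even be finite under the sole hypothesis $\D u\in L^{2}$ --- is exactly what turns the div--curl structure into a small coefficient in the decay inequality for $\Psi$, replacing the $\eps$-regularity input of Rivi\`ere--Struwe which is unavailable here. The accompanying technical point is the careful localisation of the $\h^{1}$--$BMO$ machinery, keeping $\xi_{j}$ divergence free under extension so that the relevant products genuinely lie in $\h^{1}(\R^{m})$: this is clean in the Wente estimate (the factor $\D v$ has compact support) but for the $L^{1}$ bound on $\D^{2}u$ one must work in the local Hardy space. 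Everything after the decay is routine.
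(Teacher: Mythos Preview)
Your approach is essentially the paper's: the core is the harmonic split $u=w+v$ on each ball, the div--curl/$\h^1$--$BMO$ estimate $\|\ed v\|_{L^2(B_r)}\le C[u]_{BMO}\|\ed u\|_{L^2(B_r)}$ (the paper obtains this by first writing $X_j=\ed^\ast\xi_j$ via Hodge theory, then pairing $\ed\!\ast\!\xi_j\wedge\ed\phi\in\h^1$ against a Jones $BMO$-extension of $Y^j$, but the effect is the same), iteration to Morrey decay of $r^{2-m}\!\int_{B_r}|\ed u|^2$, and Adams' Riesz-potential estimate to finish. The only differences are organisational: the paper gets $\|\ed u\|_{L^2(B_{7/8})}\le C\|u\|_{L^1(B_1)}$ directly from the harmonic split (rather than a separate Caccioppoli/absorption step), and it bypasses your intermediate H\"older-improvement loop by choosing the Morrey exponent $\alpha\in(1,\tfrac{2m}{2m-1})$ from the outset so that a single application of Adams already yields $\ed u\in L^q$ for some $q>2m$, after which Calder\'on--Zygmund and Sobolev embedding close the argument.
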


\paragraph{Acknowledgements}
The authors were supported by the EPSRC grant EP/W026597/1.

\section{Proof of Theorems \ref{main} and \ref{thm:gen}}\label{sec:p1}
Given a $C^2$-Riemannian manifold $\cN\emb \R^d$, recall that $\tilde\T : \cN \to \End (\T \R^d)$ is the projection onto $\T \cN$ whilst $\tilde\V = \Id - \tilde\T$ is the projection onto the normal bundle $\V \cN$. Given a map $u:B_1^m \to \cN$ we set $\T=\tilde\T \circ u$, $\V = \tilde\V \circ u$. 

\begin{remark}\label{rmk:T}
	We note trivially from the definition of $\T$ and $\V$ that $\T + \V \equiv \Id $ and so $\ed \T = - \ed \V$, but also  $\V\ed \T \V =0=\T \ed \T \T $ since:
	$$\T=\T^2 \implies \ed \T = \T \ed \T + \ed \T \T \implies \T \ed \T \T = 2\T \ed \T \T \quad\text{similarly} \quad \V\ed \V \V=0 \implies \V\ed \T \V =0.$$ 
In particular we may decompose $\ed \T$ into the sum of two parts, one which maps tangent vectors to normal, and its transpose: 
	$$\ed \T = \V \ed \T \T + \T \ed \T \V = \ed \T \T + \T \ed \T.$$

\end{remark}
\begin{lemma}\label{lem:idom}
	Let $\cN^n\emb \R^d$ be a $C^2$-Riemannian manifold and  $u\in W^{1,2}(B_1,\cN)$ with $\om$ defined by \eqref{eq:omdef}. Then we may write $\om$ in the following two equivalent ways: 
	\begin{enumerate}
		\item Setting $\cR=\T-\V\in W^{1,2}(B_1^m,\Or(d))$ we have $\om = \fr12 \cR^{-1}\ed \cR$. 
		\item $\om = \T\ed \T - \ed \T \T$. 
	\end{enumerate}
\end{lemma}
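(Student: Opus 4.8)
The plan is to split the two claimed formulas into a purely algebraic equivalence between the two right-hand sides, followed by a single geometric identification of either of them with the $\om$ of \eqref{eq:omdef}.

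First I would check, using only $\cR = \T - \V = 2\T - \Id$ together with $\T + \V = \Id$ and $\T^2 = \T$, that $\tfrac12\cR^{-1}\ed\cR = \T\ed\T - \ed\T\T$. Indeed $\cR^2 = (2\T - \Id)^2 = 4\T^2 - 4\T + \Id = \Id$, so $\cR = \cR^{-1} = \cR^T \in \Or(d)$ and $\ed\cR = 2\,\ed\T$; hence $\tfrac12\cR^{-1}\ed\cR = \cR\,\ed\T = (\T - \V)\ed\T = \T\ed\T - \V\ed\T$, and by Remark \ref{rmk:T} one has $\V\ed\T = (\Id - \T)\ed\T = \ed\T - \T\ed\T = (\ed\T\,\T + \T\ed\T) - \T\ed\T = \ed\T\,\T$. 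This reduces the lemma to showing that the common expression $\T\ed\T - \ed\T\T$ equals $\om$. Along the way one also sees $\cR \in W^{1,2}(B_1^m,\Or(d))$, once $\T \in W^{1,2}$ is known — see below.

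For this identification I would use the classical relation between the derivative of the tangential projector and the second fundamental form. Since $\cN$ is $C^2$, the projector $\tilde\T : \cN \to \End(\T\R^d)$ is $C^1$, so the chain rule gives the a.e.\ identity $\ed\T = (D\tilde\T)|_{u}\,\ed u$, and in particular $\T \in W^{1,2}(B_1^m,\End(\T\R^d))$. Differentiating the relations ``$\tilde\T W = W$ for tangent $W$'' and ``$\tilde\T\xi = 0$ for normal $\xi$'' along $\cN$, and using the symmetry of $\tilde\T$, one obtains for every $v\in\R^d$ that $(\ed\T\,\T)v = \A(v,\ed u)$ — this is the tangent-to-normal block $\ed\T\,\T = \V\,\ed\T\,\T$ of Remark \ref{rmk:T}, and the computation uses $\A(X,Y) = \A(X^\top,Y^\top)$ together with the symmetry of $\A$ — while the complementary block $\T\ed\T = \T\,\ed\T\,\V$ satisfies $\langle(\T\ed\T)v, W\rangle = \langle\A(\ed u, W), v\rangle$ for all tangent $W$, i.e.\ $(\T\ed\T)v = \langle\A(\ed u,\cdot)^\sharp, v\rangle$. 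Subtracting gives $(\T\ed\T - \ed\T\T)v = \langle\A(\ed u,\cdot)^\sharp, v\rangle - \A(v,\ed u)$, which is exactly the coordinate-free description \eqref{eq:omv} of $\om$, and hence $\om = \T\ed\T - \ed\T\T$. Combined with the previous paragraph, both formulas follow.

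The only genuine content is the projector–second-fundamental-form identity just used; everything else is bookkeeping with the block decomposition of $\ed\T$ from Remark \ref{rmk:T}. The points requiring a little care — rather than presenting real difficulty — are that all manipulations are pointwise a.e.\ and rest on $\ed\T = (D\tilde\T)|_u\,\ed u$ being valid a.e.\ for $W^{1,2}$ maps into a $C^2$ target, and that the $^\top$, $^\sharp$ and symmetry conventions in \eqref{eq:omv} be matched exactly.
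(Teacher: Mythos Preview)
Your proposal is correct and follows essentially the same approach as the paper: both arguments reduce to identifying $\ed\T\,\T v = \A(v,\ed u)$ and $\T\ed\T\,v = \langle \A(\ed u,\cdot)^\sharp,v\rangle$, then use the algebra $\tfrac12\cR^{-1}\ed\cR = (\T-\V)\ed\T = \T\ed\T - \ed\T\,\T$ via Remark~\ref{rmk:T}. The only cosmetic differences are that the paper proves part~2 first and part~1 second, and computes $D\tilde\T$ explicitly in a local orthonormal frame with $\nabla^{\cN}_{e_i}e_j(z_0)=0$ rather than differentiating the defining relations of the projector as you do.
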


\begin{remark}\label{rmk:totgeo}
	 As may be seen in the work of Uhlenbeck \cite[Section 8]{uchi}, or indeed checked directly, $\mathfrak{O}=\{O\in \Or(d):O^2 = \Id\}$ is a disjoint union of totally geodesic submanifolds of $\Or(d)$ and that $\tilde{\cR}:G(n,d) \to \Or(d)$ given by $\tilde{\cR}(E)= \Pi_{E} - \Pi_{E^\bot}$ may be considered a totally geodesic embedding onto $\mathfrak{O}_n=\{O\in \mathfrak{O} : \textrm{dim}_{+1}(O)=n\}$ where $\textrm{dim}_{+1}(O)$ is the dimension of the eigenspace corresponding to eigenvalue $+1$. Thus $\cR$ above should be thought of as the composition of $u$ with the Gauss map $G:\cN\to G(n,d)=\mathfrak{O}_n$.  
\end{remark}

\begin{remark}\label{rmk:2om}
	Note that the first point in Lemma \ref{lem:idom} indicates immediately that the related connection $\D^{2\om} = d + 2\om$ is flat. We also note later that if we additionally assume that the tension field of $u$, $\tau(u):=\T \Dl u$ is weakly in $L^1$ then we may compute (see the Appendix), for any $v\in \R^d$: 
\begin{equation}\label{eq:divom}
	-\ed^\ast \om v = -\A(v,\tau(u)) + \langle \A(\tau(u), \cdot)^\sharp,v\rangle -\D^\cN_{\ed u} \A (v, \ed u) + \langle \D^\cN_{\ed u}\A (\ed u, \cdot)^\sharp, v\rangle.
\end{equation} Thus if $u$ is harmonic, and $\cN$ is embedded via a parallel second fundamental form ($\D^\cN \A \equiv 0$) then $\ed^\ast \om = 0$, too. Examples of such $\cN$ include round spheres, $\Or(d)$, $\mathrm{U}(d)$, and real and complex Grassmannians equipped with their induced metrics e.g. from the embedding $\tilde{\mathcal{R}}$ above.      \end{remark}

\begin{proof}

For $z_0\in \cN$ fix an orthonormal frame $ \left\{ e_i(z) \right\}_{i=1}^n $ around $ z_0 \in \cN $ such that $ \nabla^\cN_{e_i} e_j(z_0) = D_{e_i} e_j(z_0) - \A(e_i, e_j) = 0 $, where we denote by $ \nabla^\cN $ the Levi-Civita connection on $ \cN $ and by $ D $ the usual Euclidean derivative in $ \R^d $. Observe that for all $ v \in \R^d $, we have
\begin{equation*}
    \Tilde{\T}(z)v = \sum_{i=1}^n \langle e_i(z), v \rangle e_i(z),
\end{equation*}
thus
\begin{equation*}
       \ed \Tilde{\T}(z)\left[ e_j \right] v = \sum_{i=1}^d \langle D_{e_j} e_i(z), v \rangle e_i(z) + \langle e_i, v \rangle D_{e_j} e_i(z).
\end{equation*}
Evaluating this at $ z_0 $, we obtain
\begin{equation*}
       \ed \Tilde{\T}(z_0) \left[ e_j \right] v = \sum_{i=1}^n \langle \A(e_i, e_j), v \rangle e_i + \langle e_i, v \rangle \A(e_i, e_j).
\end{equation*}
Now, if $ X \in T_{z_0} \cN $, using Einstein's summation convention, we can write $ X = X^j e_j $, and the above formula can be expressed as
\begin{equation}\label{eq:DT}
       \ed \Tilde{\T}(z_0) \left[ X \right] v = \sum_{i=1}^n \langle \A(e_i, X), v\rangle e_i + \A(v, X) = \langle \A(\cdot, X)^\sharp, v \rangle + \A(v, X). 
\end{equation}
Thus
\begin{equation*}
    \ed \T(x)  v = \langle \A(\cdot, \ed u)^\sharp, v \rangle + \A(v, \ed u)
\end{equation*}
and clearly
\begin{equation*}
	\ed \T (\T v)= \A(v,\ed u) \qquad \text{and} \qquad \T \ed \T v= \langle \A(\cdot, \ed u)^\sharp, v \rangle
\end{equation*}
which, by comparison with \eqref{eq:omv} gives $\om v = \T \ed \T v - \ed \T \T v$ for all $v\in \R^d$, proving the second claim.

Now, since $ \T $ and $ \cV $ are projections, it is straightforward to verify that $ \cR^T = \cR $ and $ \cR^2 = \Id $, which implies $ \cR^{-1} = \cR^T = \cR $. As a consequence, we have using Remark \ref{rmk:T}
\begin{align*}
    \frac{1}{2} \cR^{-1} \ed \cR &= \frac{1}{2} \cR \ed \cR = \frac{1}{2} \cR \left( \ed \T - \ed \cV \right) = \cR \ed \T = \left( \T - \cV \right) \ed \T = \T \ed \T - \cV \ed \T \\
    &=\T \ed \T - \ed \T \T 
\end{align*}
since $\V \ed \T = \ed \T \T$. 

\end{proof}

The connection $\D^\om$ on $u^\ast \T\R^d$ induces also connection on $u^\ast (\End(\T \R^d)$ via 
$$\D^\om E = \ed E + [\om ,E] \qquad \text{for $E\in \Gamma(u^\ast\End(\T \R^d))$}$$
from which it is straightforward to check:

\begin{lemma}\label{lem:parallel}
$\D^\om \T =0$, $\D^\om \V = 0$ and $\D^\om \cR =0$. 
\end{lemma}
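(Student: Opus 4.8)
The statement to prove is Lemma \ref{lem:parallel}: that $\T$, $\V$ and $\cR$ are parallel sections of $u^\ast\End(\T\R^d)$ with respect to the induced connection $\D^\om$. The plan is to reduce everything to a single short computation for $\T$, since $\V = \Id - \T$ gives $\D^\om\V = \D^\om\Id - \D^\om\T = [\om,\Id] - \D^\om\T = -\D^\om\T$ (the identity endomorphism is obviously parallel as $\ed\Id = 0$ and $[\om,\Id]=0$), and $\cR = \T - \V = 2\T - \Id$ gives $\D^\om\cR = 2\D^\om\T$. Hence it suffices to show $\D^\om\T = \ed\T + [\om,\T] = 0$.

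First I would write out $[\om,\T] = \om\T - \T\om$ using the coordinate-free identity $\om = \T\ed\T - \ed\T\,\T$ from Lemma \ref{lem:idom}. So $\om\T = \T\ed\T\,\T - \ed\T\,\T\,\T = \T\ed\T\,\T - \ed\T\,\T$, and by Remark \ref{rmk:T} we have $\T\ed\T\,\T = 0$, giving $\om\T = -\ed\T\,\T$. Similarly $\T\om = \T\T\ed\T - \T\ed\T\,\T = \T\ed\T - 0 = \T\ed\T$. Therefore $[\om,\T] = \om\T - \T\om = -\ed\T\,\T - \T\ed\T$. On the other hand, differentiating $\T = \T^2$ gives $\ed\T = \T\ed\T + \ed\T\,\T$ (again Remark \ref{rmk:T}), so $\ed\T + [\om,\T] = (\T\ed\T + \ed\T\,\T) - \ed\T\,\T - \T\ed\T = 0$, as desired. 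The identities for $\V$ and $\cR$ then follow immediately from the linear relations above, and one may alternatively verify $\D^\om\cR = 0$ directly from $\om = \tfrac12\cR^{-1}\ed\cR$: indeed $\ed\cR + [\om,\cR] = \ed\cR + \tfrac12\cR^{-1}(\ed\cR)\cR - \tfrac12\cR(\cR^{-1}\ed\cR) = \ed\cR + \tfrac12\cR^{-1}(\ed\cR)\cR - \tfrac12\ed\cR$, and since $\cR^2 = \Id$ implies $(\ed\cR)\cR = -\cR\,\ed\cR$, this equals $\ed\cR - \tfrac12\ed\cR - \tfrac12\ed\cR = 0$.

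I do not anticipate a genuine obstacle here: the lemma is purely algebraic once Lemma \ref{lem:idom} and Remark \ref{rmk:T} are in hand, and the only care required is bookkeeping of the noncommutative products of the endomorphisms $\T$, $\V$, $\ed\T$ — in particular keeping track of which of the relations $\T\ed\T\,\T = 0$, $\V\ed\T\,\V = 0$, $\V\ed\T = \ed\T\,\T$, $\T\ed\T = \ed\T\,\V$ is being invoked at each step. The one point worth stating explicitly is that all these manipulations are valid at the level of $W^{1,2}$ sections: $\T \in W^{1,2}\cap L^\infty$ (it is the composition of $u$ with the smooth bounded map $\tilde\T$ on the compact-in-$\cN$ image, or more precisely $\tilde\T$ extends to a $C^1$ map near $\cN$), so products like $\T\ed\T$ make sense in $L^2$ and the Leibniz rule $\ed(\T^2) = (\ed\T)\T + \T\ed\T$ holds distributionally.
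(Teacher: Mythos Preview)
Your proof is correct and follows the same route as the paper: reduce to the single identity $\ed\T + [\om,\T]=0$ and verify it using $\om = \T\ed\T - \ed\T\,\T$ together with the projection identities $\T\ed\T\,\T=0$ and $\ed\T = \T\ed\T + \ed\T\,\T$ from Remark~\ref{rmk:T}. Your additional direct check of $\D^\om\cR=0$ via $\om=\tfrac12\cR^{-1}\ed\cR$ and the remark on $W^{1,2}\cap L^\infty$ validity are fine extras, but not needed.
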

\begin{proof}
We will check only the first claim, with the others following trivially from this and Remark \ref{rmk:T}. Using part $2.$ of Lemma \ref{lem:idom} and Remark \ref{rmk:T} we have 
$$[\om ,T] = \om \T -\T \om = (\T \ed \T - \ed \T \T)\T - \T(\T \ed \T - \ed \T \T)=-\ed \T \T - \T \ed \T = -\ed \T $$
as required. 	
\end{proof}

\begin{proof}[Proof of Theorem \ref{main} and Theorem \ref{thm:gen}]

In the case of Theorem \ref{thm:gen} we note that Remark \ref{rmk:T} and Lemma \ref{lem:parallel} holds for $\Pi$, $\Pi^\bot$, and $\om_\Pi$ respectively, and indeed the below proof works by replacing $\T$ with $\Pi$, $\V$ with $\Pi^\bot$, $\om$ with $\om_\Pi$ and $\mathcal{R}$ with $\Pi-\Pi^\bot$. 

Using the Morrey-space version of the Coulomb frame proven by Rivi\`ere-Struwe \cite[Lemma 3.1]{RS08} we know that by choosing $\eps = \eps(m,d)>0$ sufficiently small there are \( P \in H^1(B_1^m, \SO(d)) \) and \( \xi \in H_0^1(B_1^m, \so(d) \otimes \wedge^{2} \T^\ast\mathbb{R}^m) \) such that
\begin{equation} \label{eq:10}
P^{-1} \ed P + P^{-1} \om P =  \ed^\ast \xi, \quad \text{and} \quad \ed \xi = 0 \quad \text{on } B_1^m.
\end{equation}
Moreover, \( \ed P \) and \( D \xi \) belong to \( M^{2,m-2}(B_1^m) \) with
\begin{equation}\label{eq:11}
\| \ed P \|_{M^{2,m-2}} + \| D \xi \|_{M^{2,m-2}} \leq C \| \om \|_{M^{2,m-2}} \leq C \varepsilon .
\end{equation}
In the above $D\xi$ denotes the collection of all first order derivatives of all components of $\xi$ which we distinguish from $\ed \xi$ which is simply the exterior derivative of the two-form.

The gauge-invariant version of Lemma \ref{lem:parallel} tells us that $$\D^{\ed^\ast \xi}(P^{-1}\T P) = \ed (P^{-1}\T P)+ [\ed^\ast \xi, P^{-1}\T P] = 0,$$ similarly  $\D^{\ed^\ast \xi}(P^{-1}\V P) = 0$ and $\D^{\ed^\ast \xi}(P^{-1}\cR P) = 0$, as may be checked directly using \eqref{eq:10}.

In particular letting $Q=P^{-1}\cR P$ then we have $\ed Q=[Q,\ed^\ast \xi]$ giving (where we always sum over repeated indices)
\begin{eqnarray*}
|\ed Q|^2 &=& \textrm{tr}( [Q,\ed^\ast \xi]\cdot \ed Q^T) =  \ast(Q^i_k\ed^\ast \xi^k_l \wedge \ast \ed Q^i_l- \ed^\ast \xi^k_l Q^l_i \wedge \ast \ed Q^k_i) \\
&=&-\ast 2(\ed \ast \xi^k_l \wedge Q^i_k\ed Q^i_l ) \\
&=& -\ast 2\ed (\ast \xi^k_l \wedge Q^i_k\ed Q^i_l)+ 2(-1)^{m-2}\ast( \ast \xi^k_l \wedge (\ed Q^i_k \wedge \ed Q^i_l) ).
\end{eqnarray*}

We may extend $\xi$ by zero to the whole of $\R^m$ without relabelling, and recall that $[\xi]_{BMO(\R^m)}\leq C\|D \xi\|_{M^{2,m-2}(\R^m)}$ in view of the Poincar\'e inequality.  We also extend $Q-\overline{Q}$ to a $W^{1,2}$ function $
\tilde{Q}$ on the whole of $\R^m$ so that $\|\ed \tilde{Q}\|_{L^2}\leq C\|\ed Q\|_{L^2(B_1)}$ and $\ed \tilde{Q} = \ed Q$ in $B_1$. By $\h^1$-BMO duality \cite{FSt72} and the fundamental results of Coiffman-Lions-Mayers-Semmes \cite{CLMS93} we have, up to a sign which is irrelevnat  
\begin{eqnarray*}
	\|\ed Q\|_{L^2(B_1)}^2 &=& 2 \int
 \ast \xi^k_l \wedge (\ed \ti{Q}^i_k \wedge \ed \ti{Q}^i_l) \\
	&\leq & 2[\xi^k_l]_{BMO} \|\ed \ti{Q}^i_k \wedge \ed \ti{Q}^i_l\|_{\h^1} \\
	&\leq& C \|D \xi\|_{M^{2,m-2}}\|\ed Q\|_{L^2(B_1)}^2\\ 
	&\leq &  C\eps \|\ed Q\|_{L^2(B_1)}^2.
\end{eqnarray*}
For $\eps$ sufficiently small, $Q$ is a constant. Now if $\{E_i\}$ and $\{N_j\}$ is an orthonormal basis $\T_{u(0)}\cN$ and $\V_{u(0)}\cN$ then we

\textbf{Claim:} $e_i(x) := P(x)P(0)^T E_i$ and $\nu_j(x):= P(x)P(0)^T N_j$ are orthonormal moving frames for $u^\ast(\T \cN)$ and $u^\ast(\V \cN)$ respectively as required by the Theorem.

Clearly both $\{e_i\}$ and $\{\nu_j\}$ are orthonormal frames satisfying the desired estimate, by \eqref{eq:11}. We will see below that $\cR(x)e_i(x) = (\T(x)-\V(x))e_i(x) = e_i(x)$ meaning that $e_i(x) \in \T_{u(x)}\cN$ almost everywhere, and similarly $\cR(x)\nu_j(x) = - \nu_j(x)$ meaning that $\nu_j(x)\in \V_{u(x)}\cN$ almost everywhere. Indeed:  
\begin{eqnarray*}
	\cR(x) e_i(x)&=&\cR(x)P(x)P(0)^T E_i=P(x) (P^{T}(x)\cR (x) P(x)) (P(0)^T E_i) \\
	&=&P(x)(P(0)^T \cR(0) P(0))P(0)^T E_i = e_i(x)
\end{eqnarray*}
and 
\begin{eqnarray*}
	\cR(x) \nu_j(x)&=&\cR(x)P(x)P(0)^T N_j=P(x) (P^{T}(x)\cR (x) P(x)) (P(0)^T N_j) \\
	&=&P(x)(P(0)^T \cR(0) P(0))P(0)^T N_j = -\nu_j(x). 
\end{eqnarray*}
It remains to check that $\ed^\ast (e_i\cdot \ed e_j)=0$ and similarly for $\{\nu_j\}$. We do this for the former leaving the latter to the reader. Assume w.l.o.g. that $P(0)=\Id$ and note that 
$$e_i\cdot \ed e_j = PE_i\cdot (\ed P)E_j = E_i \cdot P^{-1}\ed P E_j =E_i\cdot \ed ^\ast \xi E_j - E_i \cdot (P^{-1}\om P) E_j.$$
The result follows from $E_i \cdot (P^{-1}\om P) E_j=e_i \cdot \om e_j=0$ since $\om e_j$ is a normal vector by Remark \ref{rmk:T}.	
\end{proof}

\begin{remark}\label{rmk:m=2}
When $m=2$ one finds \( P \in H^1(B_1^m, \SO(d)) \) and \( \xi \in H_0^1(B_1^m, \so(d))\) solving
\begin{equation*}
P^{-1} \ed P + P^{-1} \om P =  \ast \ed \xi \quad \text{on } B, \quad \text{with}\quad \| \ed P \|_{L^2}\leq 2\|\om\|_{L^2} \quad\text{and} \quad \| \ed \xi \|_{L^2} \leq \| \om \|_{L^2}\end{equation*}
regardless of the size of $\eps$ (see e.g. \cite[Theorem 8.4]{S14}). The optimal $L^2$ Wente estimate on dics \cite{G98} then gives 
$\|\ed Q\|_{L^2}^2\leq \sqrt{\frac{3}{4\pi}}\|\om\|_{L^2}\|\ed Q\|_{L^2}^2$ which means that $\eps = \sqrt{\frac{4\pi}{3}}$ is sufficient. 
\end{remark}
\section{Proof of Theorem \ref{thm:bmoreg}}\label{sec:p2}

\begin{proof}[Proof of Theorem \ref{thm:bmoregsup}]
   For simplicity we will give the proof only in the case that $g$ is the Euclidean metric, leaving the required  changes necessary for the general case to the interested reader. 
   
   The first estimate on $\|\D^2 u\|_{L^1(B_{1/2})}$ is standard - by suitably extending $X_j$ and $Y^j$ similarly to the below one may use e.g. \cite[Theorem 3.2.9]{helein_conservation}.
      
  For the $BMO$-regularity part we begin by noticing that, from standard $L^2$-Hodge theory (see for instance \cite[Corollary 10.5.1]{IM01}), there exists $ \xi_j \in W^{1,2}(B_1^m,  \wedge^{2} \T^\ast \mathbb{R}^m) $ such that $X_j = \ed^\ast \xi_j$, each component of $\xi_j$ has mean zero in $B_1$ and $\|D{\xi}_j\|_{L^2(\R^m)}\leq \|X_j\|_{L^2(B_1)}$. We extend $\xi_j$ to $\tilde{\xi}_j\in W^{1,2}(\R^m,\wedge^2\T^\ast\R^m)$ and for which we still have
    \[
    \Vert D \tilde{\xi}_j \Vert_{L^2} \leq C \Vert X_j \Vert_{L^2(B_1)}\leq C\|\ed u\|_{L^2(B_1)}.
    \]
We similarly extend each $Y^j-\overline{Y}^j$ to $\tilde{Y}^j$, defined on the whole of $\R^m$ and for which we have $\ed \tilde{Y}^j = \ed Y^j$ in $B_1$ and (see \cite{Jo80}) 
    $$[\tilde{Y}^j]_{BMO(\R^m)}\leq C[Y^j]_{BMO(B_1)}\leq C[u]_{BMO(B_1)}.$$

   Testing the equation with $\phi \in C_c^\infty(B_1^m, \R^d)$ and applying the divergence theorem, we obtain
    \begin{eqnarray*}
    \int   X_j \cdot \ed Y^j \phi &=& \int \ed^\ast \tilde{\xi}_j \cdot \ed \tilde{Y}^j \phi = -\int_{B_1} (  \ed^\ast \tilde{\xi}_j \cdot \ed \phi ) \tilde{Y}^j \\
    &=& \int ( \ed \ast \tilde{\xi}_j \wedge \ed \phi)\tilde{Y}^j.
     \end{eqnarray*}
 Now, \cite{CLMS93} gives that 
 $$\|( \ed \ast \tilde{\xi}_j \wedge \ed \phi)\|_{\h^1(\R^m)}\leq C\| D \tilde\xi_j\|_{L^2(\R^m)}\|\ed \phi\|_{L^2(B_1)}\leq C\|\ed u\|_{L^2(B_1)}\|\ed \phi\|_{L^2(B_1)}.$$ 
 Combined with the $\h^1-BMO$  duality we have, for all $\phi \in C_c^\infty(B_1^m, \R^d)$ 
\begin{eqnarray*}
  \int   X_j \cdot \ed Y^j \phi   &\leq& C\|\ed u\|_{L^2(B_1)}\|\ed \phi\|_{L^2(B_1)} [\tilde{Y}^j]_{BMO(\R^m)}  \\
    &\leq & C\|\ed u\|_{L^2(B_1)}[u]_{BMO(B_1)} \|\ed \phi\|_{L^2(B_1)}, \notag
    \end{eqnarray*}
    giving $\|X_j\cdot \ed Y^j\|_{H^{-1}(B_1)}\leq C\|\ed u\|_{L^2(B_1)}[u]_{BMO(B_1)}$. Now letting $v\in W_0^{1,2}$ be the unique solution to 
    $\Dl v^i =  X_j \cdot \ed Y^j$ 
    we have 
    \begin{equation}\label{eq:dv} 
    \|v\|_{L^2(B_1)}^2+\|\ed v\|_{L^2(B_1)}^2 \leq C\|\ed u\|_{L^2(B_1)}^2[u]_{BMO(B_1)}^2.
    \end{equation}
    
    We now may essentially follow some of the arguments as in \cite{ST13} to complete the proof which we outline briefly: 
    
    Write $u=h+v$ for $h$ a harmonic function. We first observe that $\|h\|_{L^1(B_1)}\leq C(\|u\|_{L^1(B_1)} + \|v\|_{L^1(B_1)})\leq C(\|u\|_{L^1(B_1)}+\|\ed u\|_{L^2(B_1)}[u]_{BMO(B_1)})$, whence one trivially has that 
    $$\|\ed h\|^2_{L^2(B_{1/2})} \leq C(\|u\|_{L^1(B_1)}^2+\|\ed u\|_{L^2(B_1)}^2[u]_{BMO(B_1)}^2).$$ 
    
    Thus in particular, 
 $$\|\ed u\|^2_{L^2(B_{1/2})} \leq C(\|\ed u\|_{L^2(B_1)}^2[u]_{BMO(B_1)}^2 + \|u\|_{L^1(B_1)}^2)$$
and by repeating the argument above on each ball $B_{2R}(x)\In B_1$, and by the scaling properties of each quantity we see that 
\begin{eqnarray*}
	\|\ed u\|^2_{L^2(B_{R/2(x)})} &\leq& C(\|\ed u\|_{L^2(B_R(x))}^2[u]_{BMO(B_1)}^2 + R^{-m-2}\|u\|_{L^1(B_1)}^2)\\
	&\leq & C\eps^2\|\ed u\|_{L^2(B_R(x))}^2 + CR^{-m-2} \|u\|_{L^1(B_1)}^2. 
\end{eqnarray*}
Hence exactly as equation (23) in \cite{ST13} is derived using \cite[Lemma A.7]{ST13} we may obtain that, for $\eps$ sufficiently small
\begin{equation}\label{eq:dul1}
	\|\ed u\|_{L^2(B_{7/8})}\leq C\|u\|_{L^1(B_1)}. 
\end{equation}

 Observe now that we can write $\ed u = H + \ed v$ where $H=\ed h$ is a harmonic one-form and 
    \begin{equation*} 
    \|\ed u\|_{L^2(B_1)}^2 = \|H\|_{L^2(B_1)}^2 + \|\ed v\|_{L^2(B_1)}^2.
    \end{equation*}
    Since $H$ is harmonic, the quantity $r^{-m}\|H\|_{L^2(B_r)}^2 $ is increasing, which implies
    \begin{eqnarray} \label{eq:H}
    \|H\|_{L^2(B_r)}^2 &\leq& r^m \|H\|_{L^2(B_1)}^2 \notag \\
    &\leq& r^m\|\ed u\|_{L^2(B_1)}^2.
    \end{eqnarray}
    for all $ 0 < r \leq 1$. Using equations \eqref{eq:dv} and \eqref{eq:H}, for any $\dl >0$, we have
    \begin{eqnarray} \label{eq:decay}
    \|\ed u\|_{L^2(B_r)}^2  &\leq& (\|H\|_{L^2(B_r)} + \|\ed v\|_{L^2(B_r)} )^2 \notag \\
    &\leq& (1+\dl)\|H\|_{L^2(B_r)}^2 + C_{\dl} \|\ed v\|_{L^2(B_1)}^2 \notag \\
    &\leq& (1+\dl)r^m\|\ed u\|_{L^2(B_1)}^2 + C_{\dl}\|\ed u\|_{L^2}^2[u]_{BMO}^2\nn \\
    &=&  \left((1+\dl)r^m +C_{\dl}[u]_{BMO}^2\right) \|\ed u\|_{L^2(B_1)}^2 \notag\\
    &<& \left((1+\dl)r^m +C_{\dl}\eps^2\right)\|\ed u\|_{L^2(B_1)}^2
    \end{eqnarray}

    where we also used Young's inequality. At this point, given $\alpha \in (0,2)$ to be determined later, choose $\delta, \varepsilon \in (1,2]$ sufficiently small so that 
    \begin{equation*}
    \frac{(1+2\delta)}{2^m}+C_\delta \varepsilon^2=2^{\alpha-m}.
    \end{equation*}
    For any $x\in B_{3/4}$, $\varrho = 7/8 - |x|>1/8$, and any $r\leq \rho$ we may write $2^{-k-1} \varrho \leq r \leq 2^{-k} \varrho$. Deriving \eqref{eq:decay} on $B_\varrho(x)$ and iterating gives (also from \eqref{eq:dul1})
    \begin{equation*}
        \Vert \ed u \Vert^2_{L^2(B_r(x))} \leq \left( 2^{-k+1} \right)^{m-\alpha} \Vert \ed u \Vert^2_{L^2(B_\varrho(x))} \leq 32^{m-\alpha} r^{m-\alpha} \Vert \ed u \Vert^2_{L^2(B_{7/8})}\leq Cr^{m-\al}\|u\|_{L^1(B_1)}^2.
    \end{equation*}
    The above estimate gives
    \begin{equation*}
        \Vert \ed u \Vert_{M^{2.m-\alpha}(B_{3/4})} \leq C \|u\|_{L^1(B_1)}.
    \end{equation*}
    As a consequence, since $u$ solves equation \eqref{eq:uhom} and recalling \eqref{eq:X} and \eqref{eq:Y}, we obtain that 
    \[
    \Vert \Delta u\Vert_{M^{1,m-\alpha}(B_{3/4})} \leq C \|u\|_{L^1(B_1)}^2
    \]
    and thus, fixing $\al$ such that $\alpha$ such that $1< \alpha < 2m /(2m-1)$, the Riesz potential estimates of Adams \cite{Ad75} gives $\ed u\in L^{\tilde{p}}(B_{5/8})$ for
    $
    \tfrac{1}{\tilde{p}}=1-\tfrac{1}{\alpha}.
    $  Thus for some $q>2m$, 
    $$\|\ed u\|_{L^{q}(B_{5/8})}\leq C\|u\|_{L^1(B_1)}.$$
   Going back to \eqref{eq:X} and \eqref{eq:Y} we obtain $\|\Dl u\|_{L^s(B_{5/8})} \leq C\|u\|_{L^1(B_1)}^2$ for some $s>m$ from which Calderon-Zygmund estimates and Sobolev embedding finishes the proof.  
\end{proof}

\appendix
\section{Sketch proof of \eqref{eq:divom}}

\begin{proof}
Given $X,Y\in \Gamma(\T\cN)$ and assuming that $\D^\cN_X Y = \D^\cN_Y X=0$ at $z_0$ we have 
$$D^2\tilde{\T}[X,Y]v =D_Y (D_X \tilde{\T})v$$ and starting from  \eqref{eq:DT} we have, for any $v\in \R^d$
\begin{eqnarray*}
    D^2\tilde{\T}[X,Y]v &=& D_Y(\langle \A(\cdot, X)^\sharp, v \rangle + \A(v, X)) \\
    &=& \langle D_Y(\A(\cdot, X)^\sharp), v\rangle + D_Y(\A(v, X)) \\
    &=& \langle \D_Y^\cN \A (\cdot, X)^\sharp, v\rangle  -\langle \A(\cdot, X)^\sharp , \A(Y,v) \rangle + \D_Y^\cN \A(v,X) - \langle A(v,X), A(Y,\cdot)^\sharp\rangle 
\end{eqnarray*}
where we have used that if $\eta\in \Gamma(\V \cN)$ then $D_Y \eta = \D^{\cN}_Y \eta - \langle \eta, \A(Y,\cdot)^\sharp\rangle$. 

Thus we have that 
\begin{eqnarray*}
   \ed^\ast \om v &=& \Dl \T \T v - \T \Dl \T v \\
   &=& \ed \tilde{\T}[\T \Dl u] \T v - \T \ed \tilde{\T}[\T\Dl u] v + D^2\tilde{\T}[\ed u, \ed u] \T v - \T D^2\tilde{\T}[\ed u,\ed u] v \\
   &=& \A(v,\tau(u)) - \langle \A(\tau(u), \cdot)^\sharp,v\rangle +\D^\cN_{\ed u} \A (v, \ed u) - \langle \D^\cN_{\ed u}\A (\ed u, \cdot)^\sharp, v\rangle
\end{eqnarray*}
using the above, and \eqref{eq:DT} again.

\end{proof}

\bibliographystyle{amsplain}

\begin{flushleft}

L. Appolloni: \textit{l.appolloni@leeds.ac.uk} 

B. Sharp: \textit{b.g.sharp@leeds.ac.uk}  

School of Mathematics, University of Leeds, Leeds LS2 9JT, UK  
\end{flushleft}
\end{document}